\DeclareMathOperator\C{\mathbb C}
\DeclareMathOperator\Z{\mathbb Z}
\newcommand{\Om}{\Omega}
\newtheorem{theorem}{Theorem}[section]
\newtheorem{lemma}[theorem]{Lemma}
\newtheorem{prop}[theorem]{Proposition}
\theoremstyle{definition}
\theoremstyle{remark}
\newcommand{\dontprint}[1]\relax
\newcommand{\Ga}{\Gamma}
\newcommand{\A}{{\mathbb A}}
\newcommand{\ot}{\otimes}
\newcommand{\NN}{{\mathcal N}}
\newcommand{\OO}{{\mathcal O}}
\newcommand{\VV}{{\mathcal V}}
\newcommand{\XX}{{\mathcal X}}
\newcommand{\si}{\sigma}
\newcommand{\de}{\delta}
\newcommand{\sub}{\subset}
\newcommand{\Ber}{\operatorname{Ber}}
\newcommand{\lan}{\langle}
\newcommand{\ran}{\rangle}
\newcommand{\im}{\operatorname{im}}
\renewcommand{\a}{\alpha}
\renewcommand{\b}{\beta}
\newcommand{\id}{\operatorname{id}}
\renewcommand{\th}{\theta}
\newcommand{\hra}{\hookrightarrow}
\newcommand{\Si}{\Sigma}
\numberwithin{equation}{section}
\title{A bound on the Hodge filtration of the de Rham cohomology of supervarieties}
\author{Alexander Polishchuk}
\thanks{Partially supported by the NSF grant DMS-2001224, 
and within the framework of the HSE University Basic Research Program and by the Russian Academic Excellence Project `5-100'.}
\address{
    Department of Mathematics, 
    University of Oregon, 
    Eugene, OR 97403, USA; and National Research University Higher School of Economics, Moscow, Russia
  }
  \email{apolish@uoregon.edu}
\author{Dmitry Vaintrob}
\address{IHES, Le Bois-Marie, Route de Chartres 91440, Bures-sur-Yvette, France}
\email{mvaintrob@ihes.fr}
\begin{document}

\begin{abstract} We study the relation between the Hodge filtration of the de Rham cohomology of a proper smooth supervariety $X$ and
the usual Hodge filtration of the corresponding reduced variety $X_0$.
\end{abstract}

\maketitle

\section{Introduction}

Let $X$ be a smooth proper supervariety of dimension $n|m$ over $\C$.
It is known that the de Rham cohomology $H^\bullet_{dR}(X)$ of $X$ is canonically isomorphic to the de Rham cohomology $H^\bullet_{dR}(X_0)$ of the
underlying usual variety $X_0$ of dimension $n$. However, the Hodge filtration $(F^pH^i_{dR}(X))$
on $H^i_{dR}(X)$ in general differs from the Hodge filtration $(F^pH^i_{dR}(X_0))$ on $H^i_{dR}(X_0)$
(see \cite{P-dR}). It is natural to try to compare these Hodge filtrations. There is an obvious inclusion 
$$F^pH^i_{dR}(X)\sub F^pH^i_{dR}(X_0)$$
which we can think of as an upper bound for $F^pH^i_{dR}(X)$ in terms of the Hodge filtration of $X_0$.

Our main result gives a lower bound for $F^1H^i_{dR}(X)$ in terms of the Hodge filtration of $X_0$.
It can also be formulated in terms of the canonical map
\begin{equation}\label{kappa-eq}
\kappa_{n+i}:H^i(X,\Ber_X)\to H^{n+i}_{dR}(X_0),
\end{equation}
defined either using Serre duality or using the complex of integral forms (see Sec.\ \ref{int-sec}). 

\medskip

\noindent
{\bf Theorem A}. {\it For each $i$, one has an inlcusion $F^1H^i_{dR}(X)\supset F^{m+1}H^i_{dR}(X_0)$ (recall that the dimension of $X$ is $n|m$).
Equivalently, one has an inclusion
\begin{equation}\label{main-kappa-eq}
\im(\kappa_{n+i})\sub F^{n-m}H^{n+i}_{dR}(X_0).
\end{equation}
}

\medskip

The equivalence in Theorem A is a consequence of a certain duality statement: we prove that the $\kappa_{n+i}$ is dual to the natural
map $H^{n-i}_{dR}(X)\to H^{n-i}(X,\OO)$, with respect to the Serre duality and the Poincare duality (see Proposition \ref{p0-prop}).
The inclusion \eqref{main-kappa-eq} was conjectured by Kontsevich.

It is natural to ask how to generalize Theorem A to give lower bounds for $F^pH^i_{dR}(X)$ for $p>1$.
We prove a certain generalization of \eqref{main-kappa-eq}, which compares an analog of Hodge filtration for the hypercohomology
of the complex of integral forms on $X$ with the classical Hodge filtration (see Theorem \ref{int-thm}).
Conjecturally, this should lead to the inclusion
$$F^{p+1}(X)\supset F^{2p+m+1}(X_0).$$
(see Sec.\ \ref{int-sec}).

The crucial role in our proof is played by a certain natural family of supervarieties $\XX\to \A^1$, whose fiber over $t\neq 0$ is isomorphic to $X$ and whose
fiber over $0$ is the split supervariety $X_{sp}$ associated with $X$. This family is a superanalog of the standard deformation to the normal cone for the inclusion
$X_0\sub X$.

\bigskip

\noindent
{\it Acknowledgment}. The second author is grateful to Maxim Kontsevich for posing the problem and for useful discussions.

\bigskip


\section{Hodge filtrations for the de Rham complex and for the complex of integral forms}\label{deR-sec}

\subsection{De Rham cohomology}

Recall that for a smooth proper supervariety $X$ we have the de Rham complex $\Om^\bullet_X$ (placed in degrees $\ge 0$). 
The de Rham cohomology $H^\bullet_{dR}(X)$ is defined as the hypercohomology of $\Om^\bullet$.

Let $i:X_0\hra X$ be the bosonization of $X$, so $\OO_{X_0}=\OO_X/\NN$, where $\NN$ is the ideal generated by odd functions.
The natural map $\Om^\bullet_X\to i_*\Om^\bullet_{X_0}$ is known to be a quasi-isomorphism (see e.g., \cite{P-dR}), so it induces isomorphisms
\begin{equation}\label{dR-X-X0-eq}
H^q_{dR}(X)\simeq H^q_{dR}(X_0)\simeq H^q(X_0,\C),
\end{equation}
where $H^q(X_0,\C)$ denotes the cohomology of the constant sheaf with respect to the analytic topology.
Equivalently, the inclusion of the constant sheaf $\C_X$ into $\Om^\bullet_X$ induces an isomorphism on cohomology (in the analytic topology).


The Hodge filtration on de Rham cohomology is defined by 
$$F^pH^i_{dR}(X):=\im(H^i(X,\si_{\ge p}\Om^\bullet)\to H^i(X,\Om^\bullet),$$
where $\si_{\ge p}$ denotes the stupid truncation of a complex.
Equivalently, $F^pH^i_{dR}(X)$ is the kernel of the map to $H^i(X,\si_{\le p-1}\Om^\bullet)$.

The commutative diagram
\begin{equation}\label{Hodge-X-X0-diagram}
\begin{diagram}
H^i(X,\Om^{\ge p}_X)&\rTo{}& H^i(X_0,\Om^{\ge p}_{X_0})\\
\dTo{}&&\dTo{}\\
H^i(X,\Om^\bullet_X)&\rTo{}& H^i(X_0,\Om^\bullet_{X_0})
\end{diagram}
\end{equation}
shows that under the identification \eqref{dR-X-X0-eq}
we have an inclusion $F^pH^i_{dR}(X)\sub F^pH^i_{dR}(X_0)$.

Recall that $X$ is called {\it split} if there is an isomorphism of algebras
$\OO_X\simeq \bigwedge_{\OO_{X_0}}(\VV)$ for a vector bundle $\VV$ over
$X_0$. A weaker condition is that $X$ is {\it projected}, i.e., there exists a projection $p:X\to X_0$ such that the composition
$X_0\rTo{i} X\rTo{p} X_0$ is the identity map.

\begin{lemma}\label{proj-de-rham-lem}
Assume that $X$ is projected. Then under the identification \eqref{dR-X-X0-eq} one has $F^pH^i_{dR}(X)=F^pH^i_{dR}(X_0)$.
\end{lemma}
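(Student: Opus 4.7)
The plan is to exploit the existence of the projection $p: X \to X_0$ to produce a left inverse to the map $i^*: H^i_{dR}(X) \to H^i_{dR}(X_0)$ that is compatible with the Hodge (stupid truncation) filtration, and then combine this with the already-known inclusion $F^pH^i_{dR}(X) \subset F^pH^i_{dR}(X_0)$.

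First I would observe that pullback along the projection $p: X \to X_0$ gives a morphism of complexes of sheaves $p^{-1}\Om^\bullet_{X_0} \to \Om^\bullet_X$, and hence induces a map on hypercohomology
$$p^*: H^i(X_0,\Om^\bullet_{X_0}) \to H^i(X,\Om^\bullet_X),$$
which is strictly compatible with the stupid truncation $\si_{\ge p}$: restricting to $\si_{\ge p}\Om^\bullet$ gives the analogous map $p^*: H^i(X_0,\Om^{\ge p}_{X_0}) \to H^i(X,\Om^{\ge p}_X)$, and these fit into a commutative square with the maps into the full de Rham hypercohomology. The same is true for $i^*$, yielding in total a diagram of the same shape as \eqref{Hodge-X-X0-diagram}, but with an extra arrow $p^*$ going in the opposite direction.

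Next, using the relation $p \circ i = \id_{X_0}$, I would verify that $i^* \circ p^* = \id$, both on $H^i(X_0,\Om^{\ge p}_{X_0})$ and on $H^i(X_0,\Om^\bullet_{X_0})$, by functoriality of hypercohomology. Since the composition $i^*$ on the full de Rham complex is the isomorphism of \eqref{dR-X-X0-eq}, it follows that $p^*$ is its inverse, and in particular that $p^*$ is also an isomorphism on the level of total hypercohomology.

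Finally I would conclude: given any class $x \in F^pH^i_{dR}(X_0)$, lift it to some $y \in H^i(X_0,\Om^{\ge p}_{X_0})$; then $p^*(y) \in H^i(X,\Om^{\ge p}_X)$ maps to a class $\tilde x \in F^pH^i_{dR}(X)$ whose image under $i^*$ is $i^*p^*(x) = x$. Thus $i^*$ maps $F^pH^i_{dR}(X)$ \emph{onto} $F^pH^i_{dR}(X_0)$, and combined with the opposite inclusion from \eqref{Hodge-X-X0-diagram} this yields the desired equality. There is no serious obstacle here; the only mildly delicate point is checking that $p^*$ is genuinely compatible with the stupid truncation filtration, but this is automatic because $p^*$ is a morphism of filtered complexes on the nose.
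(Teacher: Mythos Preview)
Your proposal is correct and follows essentially the same approach as the paper: use the pullback $p^*$ along the projection to produce a section of $i^*$ on the truncated hypercohomology $H^i(\Om^{\ge p})$, conclude that the top arrow in diagram \eqref{Hodge-X-X0-diagram} is surjective, and combine this with the already-known inclusion. The paper's argument is just a more compressed version of what you wrote.
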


\begin{proof}
Let $\pi:X\to X_0$ be a projection. Then the pull-back by $\pi$ gives a map
$\pi^*:H^i(\Om^{\ge p}_{X_0})\to H^i(\Om^{\ge p}_X)$ whose composition with the pull-back by $i$ is the identity
on $H^i(\Om^{\ge p}_{X_0})$. Hence, the top horizontal map in the diagram \eqref{Hodge-X-X0-diagram}
is surjective, which implies the assertion.
\end{proof}



\subsection{Complex of integral forms}\label{int-sec}

We denote by $\Ber_X$ the Berezinian line bundle on $X$ (obtained as the Berezinian of $\Om^1_X$).
Recall that the complex of integral forms $\Si_{\bullet}=\Si_{X,\bullet}$ is a complex of the form
$$\ldots \Ber_X\ot {\bigwedge}^2 T_X\rTo{\de} \Ber_X\ot T_X\rTo{\de} \Ber_X,$$
placed in degrees $\le n$ (so that $\Si_n=\Ber_X$). The formula for the differential $\de$ that uses the right connection on $\Ber_X$ can
be found in \cite[ch.\ IV, 5.4]{Manin}, \cite{Penkov}.

It is known (see \cite{Penkov}) that there is a natural isomorphism
\begin{equation}\label{int-forms-coh-eq}
H^i(X_0,\C)=H^i(X_0,\Om^\bullet)\rTo{\sim} H^i(\Si_\bullet)
\end{equation}
and that the map
\begin{equation}\label{Ber-comp-eq}
H^n(\Ber)\to H^{2n}(\Si_{\bullet})\simeq H^{2n}(X_0,\C)\simeq \C
\end{equation}
is an isomorphism.

Mimicking the de Rham case, we can define the Hodge filtration on hypercohomology integral forms. 
Namely, we define $F_{n-p}H^i(\Si_{\bullet})$ to be the image of the map
$$\kappa_i^{n-p}:H^i(\si_{\ge n-p}\Si_{\bullet})\to H^i(\Si_{\bullet}).$$
For example, the image of the map $\kappa_{n+i}=\kappa_{n+i}^n$ considered in \eqref{kappa-eq} is $F_nH^{n+i}(\Si_\bullet)$.


The next result is a generalization of the inclusion \eqref{main-kappa-eq}. It will be proved in Section \ref{Def-sec}.

\begin{theorem}\label{int-thm}
One has an inclusion
$$F_{n-p}H^i(\Si_X)\sub F_{n-2p-m}H^i(X_0,\C).$$
\end{theorem}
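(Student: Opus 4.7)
The plan is to use the deformation family $\XX\to\A^1$ introduced in the introduction (with $\XX_t\simeq X$ for $t\ne 0$ and $\XX_0=X_{sp}$) together with its natural $\G_m$-equivariant structure, and reduce Theorem \ref{int-thm} to an explicit computation on the split model $X_{sp}$. Concretely, I would form the relative complex of integral forms $\Si_{\XX/\A^1,\bullet}$ and, for fixed $p$, consider the image sheaf
$$\FF^{(p)}=\im\bigl(R^i\pi_*(\si_{\ge n-p}\Si_{\XX/\A^1,\bullet})\to R^i\pi_*\Si_{\XX/\A^1,\bullet}\bigr),$$
where $\pi\colon\XX\to\A^1$ is the projection. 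Since the closed embedding $X_0\hra\XX$ remains fixed as $t$ varies, the target $R^i\pi_*\Si_{\XX/\A^1,\bullet}$ is canonically identified with the constant bundle $H^i(X_0,\C)\ot\OO_{\A^1}$, so that $\FF^{(p)}$ becomes a coherent subsheaf whose fiber at each $t\ne 0$ is $F_{n-p}H^i(\Si_X)$.

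Next I would use the $\G_m$-action on $\XX\to\A^1$ (scaling $\A^1$ and, on $X_{sp}$, acting as the standard scaling of odd coordinates). This action fixes $X_0$ pointwise and therefore acts trivially on $H^i(X_0,\C)$. The subsheaf $\FF^{(p)}$ is thus $\G_m$-equivariant inside $H^i(X_0,\C)\ot\OO_{\A^1}$, and decomposes uniquely as $\FF^{(p)}=\bigoplus_{w\ge 0} V_w\cdot t^w$ for an increasing chain of subspaces $V_0\sub V_1\sub\ldots\sub H^i(X_0,\C)$, with generic fiber $\bigcup_w V_w=F_{n-p}H^i(\Si_X)$. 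Since $F^{n-2p-m}H^i(X_0,\C)$ is itself a $\G_m$-invariant subspace (trivially so), the inclusion of the theorem reduces to showing $V_w\sub F^{n-2p-m}H^i(X_0,\C)$ for every weight $w$, i.e.\ to the corresponding graded statement on the split model $X_{sp}$.

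For the split case, I would use $T_{X_{sp}}=\pi^*(T_{X_0}\oplus\Pi\VV)$ and $\Ber_{X_{sp}}=\pi^*(\om_{X_0}\ot\det\VV)$, where $\VV$ is the rank-$m$ bundle underlying the split, to obtain a bigraded decomposition
$$\pi_*\Si_{X_{sp},n-k}=\bigoplus_{a+b=k}\bigwedge\VV^*\ot\Om^{n-a}_{X_0}\ot\det\VV\ot\operatorname{Sym}^b\VV,$$
together with a splitting of the integral forms differential into a de Rham piece on $\Om^\bullet_{X_0}$ and a Koszul-type piece in the $\VV$ direction. Running the resulting spectral sequence computing $H^i(X_0,\C)=\bigoplus_q H^{i-q}(X_0,\Om^q_{X_0})$ and tracking how the Berezinian twist $\det\VV$ and the Koszul contractions against $\operatorname{Sym}^b\VV$ shift the effective Hodge degree, the constraint $a+b\le p$ coming from the truncation $\si_{\ge n-p}$ will produce the bound $q\ge n-2p-m$ in the image, which is precisely the inclusion $V_w\sub F^{n-2p-m}H^i(X_0,\C)$.

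The main obstacle I anticipate is the simultaneous bookkeeping of three gradings: the Hodge grading on $X_0$, the $\G_m$-weight from the deformation, and the wedge-power degree in $\bigwedge^\bullet T_X$ that controls the truncation $\si_{\ge n-p}$. In particular, one must verify that the relative Hodge-to-de Rham spectral sequence for $\Si_{\XX/\A^1,\bullet}$ is sufficiently well-behaved for $\FF^{(p)}$ to be the claimed $\G_m$-equivariant subbundle of $H^i(X_0,\C)\ot\OO_{\A^1}$, and that its specialization at $t=0$ is genuinely governed by the Hodge filtration of $X_{sp}$ rather than by some strictly larger subspace coming from jumps in the relative cohomology. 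Once this framework is in place, the Hodge-degree count in the split case becomes essentially a Koszul-style calculation.
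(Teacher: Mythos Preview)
Your overall strategy --- use the deformation to the normal cone $\XX\to\A^1$ and compare the generic fiber with the split fiber --- is exactly the one the paper employs. The substantive difference is that you try to organize the comparison via the $\G_m$-equivariance of $\FF^{(p)}\subset H^i(X_0,\C)\otimes\OO_{\A^1}$, whereas the paper uses the Gauss--Manin connection together with Griffiths transversality. These two devices are closely related, but your implementation has a genuine gap at the point where the number $n-2p-m$ is supposed to appear.

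Concretely: your decomposition $\FF^{(p)}=\bigoplus_{w\ge 0} V_w\, t^w$ is correct, and the generic fiber is indeed $\bigcup_w V_w$. But specializing to $t=0$ only sees $V_0$, and the image of $H^i(\si_{\ge n-p}\Si_{X_{sp}})\to H^i(X_0,\C)$ gives only a \emph{lower} bound for $V_0$, not for the larger $V_w$. Your Koszul computation on $X_{sp}$ therefore cannot by itself bound the higher $V_w$. Moreover, the arithmetic you sketch is not right: on $X_{sp}$ the constraint $a+b\le p$ with $q=n-a$ yields $q\ge n-p$, not $q\ge n-2p-m$; the twist by $\det\VV$ and the $\mathrm{Sym}^b\VV$ factors shift the $\G_m$-weight, not the Hodge degree on $X_0$. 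In fact Lemma~\ref{proj-int-lem} already gives $F_{n-p}H^i(\Si_{X_{sp}})=F^{n-p}H^i(X_0,\C)$ with no spectral sequence needed, so the split computation alone cannot explain the loss of $p+m$ Hodge steps.

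What is missing is the mechanism that trades $\G_m$-weight for Hodge degree. In the paper this comes in two pieces. First (Lemma~\ref{power-t-lem}), one has $t^{m+p}f^*\Si_{X,n-p}\subset\Si_{\XX/\A^1,n-p}$, so any class $\alpha\in H^q(\si_{\ge n-p}\Si_X)$ extends over $t=0$ only after multiplication by $t^{m+p}$; in your language this says $\kappa^{n-p}(\alpha)\in V_{m+p}$, and it is this weight computation that produces the shift $m+p$. Second, Griffiths transversality for the Gauss--Manin connection says each application of $\nabla_t$ relaxes the truncation by one step, so $\nabla_t^{m+p}$ carries the lifted class into $\si_{\ge n-p-(m+p)}=\si_{\ge n-2p-m}$ while turning $t^{m+p}\kappa^{n-p}(\alpha)$ into the constant $(m+p)!\,\kappa^{n-p}(\alpha)$; only then does restriction to $t=0$ and Lemma~\ref{proj-int-lem} finish the argument. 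Your $\G_m$-framework has no analogue of this second step as written; to salvage it you would need to prove directly that $V_w\subset F^{\,n-p-w}H^i(X_0,\C)$ for each $w$, which is essentially a reformulation of Griffiths transversality and does not follow from the static picture on $X_{sp}$.
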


It is natural to conjecture that the Hodge filtrations on the de Rham cohomology and on the hypercohomology of the complex of integral forms are related.

\medskip

\noindent
{\bf Conjecture B}.
{\it $F^{p+1}H^{i}(\Om^\bullet_X)$ is equal to the orthogonal of $F_{n-p}H^{2n-i}(\Si_{X,\bullet})$ with respect to the Poincare pairing on 
$H^\bullet(X_0,\C)$.
}

\medskip

Note that the analog of Conjecture B for usual (even) varieties is an easy corollary of the Hodge decomposition.
If Conjecture B is true then the statement of Theorem \ref{int-thm} is equivalent to the inclusion
$$F^{p+1}H^i_{dR}(X)\supset F^{2p+m+1}H^i_{dR}(X_0).$$

We can check Conjecture B for $p=0$. 

\begin{prop}\label{p0-prop} 
The map $\kappa_{n+i}:H^i(X,\Ber_X)\to H^{n+i}(X_0,\C)$ is dual to the natural map $p_{n-i}:H^{n-i}(X_0,\C)\to H^{n-i}(X,\OO_X)$,
where we use the Poincare pairing to identify $H^{n-i}(X_0,\C)$ with the dual of $H^{n+i}(X_0,\C)$, and we use the Serre duality to
identify $H^{n-i}(X,\OO_X)$ with the dual of $H^i(X,\Ber_X)$.
Hence, Conjecture B holds for $p=0$.
\end{prop}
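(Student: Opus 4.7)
The strategy is to realise both maps at the level of sheaf complexes and then to express the Poincare pairing via the contraction action of $\Om_X^\bullet$ on $\Si_\bullet$. Observe that $\kappa_{n+i}$ is induced by the inclusion of the top stupid truncation $\iota:\Ber_X[-n]=\si_{\ge n}\Si_\bullet\hra\Si_\bullet$, while $p_{n-i}$ comes from the projection onto the bottom one, $\rho:\Om_X^\bullet\to\si_{\le 0}\Om_X^\bullet=\OO_X$. The tool linking them is the natural contraction pairing
\[
\mu:\Om_X^\bullet\ot\Si_\bullet\to\Si_\bullet,
\]
whose only non-vanishing components are $\Om_X^p\ot\Si_q\to\Si_{q-p}$ (\cite[ch.\ IV, 5.4]{Manin}, \cite{Penkov}).

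The main step, and the principal obstacle, is to verify that composing $\mu$ with the trace isomorphism from \eqref{Ber-comp-eq} reproduces the Poincare pairing on $H^\bullet(X_0,\C)$: that is, the pairing
\[
H^i(X,\Om^\bullet_X)\ot H^{2n-i}(X,\Si_\bullet)\to H^{2n}(X,\Si_\bullet)\simeq H^n(X,\Ber_X)\simeq\C
\]
should agree with classical Poincare duality under the isomorphisms \eqref{dR-X-X0-eq} and \eqref{int-forms-coh-eq}. One expects to handle this by reducing locally to a split model, where both complexes decompose as the classical de Rham and dualising complexes of $X_0$ tensored with Koszul-type complexes in the odd directions, and the pairing factorises accordingly.

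Granting this, the rest is a direct \v Cech computation. A representative of $\kappa_{n+i}(\alpha)$ for $\alpha\in H^i(X,\Ber_X)$ lives in the \v Cech bicomplex of $\Si_\bullet$ entirely in bidegree $(i,n)$, while the $\Om^p$-component of a representative of $\beta\in H^{n-i}(X,\Om^\bullet_X)$ has bidegree $(n-i-p,p)$. Applying $\mu$ produces a class in bidegree $(n-p,n-p)$ of $\Si_\bullet$, which has total degree $2n$ only when $p=0$. Hence the pairing depends only on the $\OO_X$-component $p_{n-i}(\beta)\in H^{n-i}(X,\OO_X)$ of $\beta$, and reduces to the cup product $\alpha\cdot p_{n-i}(\beta)\in H^n(X,\Ber_X)$ for the multiplication $\OO_X\ot\Ber_X\to\Ber_X$. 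The latter, composed with the trace to $\C$, is by definition the Serre duality pairing, so $\lan\kappa_{n+i}(\alpha),\beta\ran_{\text{Poinc.}}=\lan\alpha,p_{n-i}(\beta)\ran_{\text{Serre}}$ and the asserted duality is established.

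Finally, Conjecture B for $p=0$ is immediate from this: $F^1H^i(\Om^\bullet_X)=\ker(p_i)$ and $F_nH^{2n-i}(\Si_\bullet)=\im(\kappa_{2n-i})$, and since $\kappa_{2n-i}$ is dual to $p_i$ by the case just proved, the kernel of $p_i$ is exactly the Poincare-orthogonal of the image of $\kappa_{2n-i}$.
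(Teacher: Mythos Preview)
Your overall strategy---realize both maps as truncation maps and compute the pairing via the module structure of $\Si_\bullet$ over the de Rham side---is exactly the paper's. However, there is a genuine error in your execution, and the paper's version of the argument is cleaner in a way that dissolves what you call the ``principal obstacle''.

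\medskip

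\textbf{The error.} Your formula for the contraction pairing has the wrong direction: the components of $\mu$ are $\Om_X^p\ot\Si_q\to\Si_{q+p}$, not $\Si_{q-p}$. Indeed $\Si_{n-k}=\Ber_X\ot\bigwedge^k T_X$, and contracting a $p$-form into a $k$-polyvector yields a $(k-p)$-polyvector, i.e.\ lands in $\Si_{n-k+p}$. With the correct $\mu$, pairing the bidegree-$(i,n)$ representative of $\kappa_{n+i}(\a)$ with the $\Om^p$-component $\b_p$ (in bidegree $(n-i-p,p)$) produces a class in bidegree $(n-p,\,n+p)$, which has total degree $2n$ for \emph{every} $p$, as it must for a cup product. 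Your ``total degree $2n$ only when $p=0$'' argument is therefore spurious. The actual reason only $p=0$ survives is simply that $\Si_{n+p}=0$ for $p>0$, since the complex of integral forms sits in degrees $\le n$. Once this is fixed, the rest of your computation goes through.

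\medskip

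\textbf{Comparison with the paper.} The paper avoids both the contraction pairing $\mu$ and the need to verify that it induces Poincar\'e duality. Instead of pairing $\Si_\bullet$ with $\Om^\bullet_X$, it pairs $\Si_\bullet$ with the constant sheaf $\C_X$ via scalar multiplication $\Si_\bullet\ot\C_X\to\Si_\bullet$. Since both $\Om^\bullet_X$ and $\Si_\bullet$ are resolutions of $\C_X$, this trivially induces the cup product on $H^\bullet(X_0,\C)$, so your ``main obstacle'' evaporates. The identity then reduces immediately to compatibility of the two sheaf maps $\Ber_X\ot\C_X\to\Ber_X$ and $\Ber_X\ot\OO_X\to\Ber_X$, with no \v Cech bidegree bookkeeping required. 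Your deduction of Conjecture~B for $p=0$ at the end is correct.
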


\begin{proof}
For a pair of classes $\a\in H^i(X,\Ber_X)$ and $\b\in H^{n-i}(X_0,\C)$, we have to check the equality
$$\lan \kappa_i(\a),\b\ran_{PD}=\lan \a, p_{n-i}(\b)\ran_{SD},$$
where on the left we use the Poincare duality and on the right we use the Serre duality.

By definition, the Serre duality is induced by the cup product and by the canonical functional on $H^n(X,\Ber_X)$.
The latter canonical functional can be computed as the composition \eqref{Ber-comp-eq}.
Hence, we need to check the following identity in $H^{2n}(X_0,\C)$:
$$\kappa_i(\a)\cup \b=\kappa_n(\a\cup p_{n-i}(\b))=\kappa_n(\a\cup \b),$$
where the cup-product $\a\cup \b$ is induced by the natural pairing of sheaves $\Ber_X\ot \C_X\to \Ber_X$.
Now the assertion follows from the fact that the cup product on de Rham cohomology is induced by the sheaf pairing
$\Si_{X,\bullet} \ot \C_X\to \Si_{X,\bullet}$.
\end{proof}

Another case when Conjecture B is true is the case of a projected supervariety $X$, as follows from Lemma \ref{proj-de-rham-lem} and
the next result. 

\begin{lemma}\label{proj-int-lem} 
Assume that $X$ is projected. Then under the identification \eqref{int-forms-coh-eq} we have
$$F_{n-p}H^i(\Si_X)=F^{n-p}H^i_{dR}(X_0).$$
\end{lemma}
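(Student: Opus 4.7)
The plan is to mirror the argument of Lemma~\ref{proj-de-rham-lem}: use the projection $\pi:X\to X_0$ to construct a natural morphism of filtered complexes of sheaves on $X_0$,
\[
\phi:(\Om^\bullet_{X_0},\Om^{\ge n-p}_{X_0})\to(\pi_*\Si_{X,\bullet},\si_{\ge n-p}\pi_*\Si_{X,\bullet}),
\]
that is a quasi-isomorphism inducing the Penkov isomorphism \eqref{int-forms-coh-eq}, and that realizes each $\Om^k_{X_0}$ as a direct summand of $\pi_*\Si_{X,k}$ with acyclic complement. Given such a $\phi$, the equality $F_{n-p}H^i(\Si_X)=F^{n-p}H^i_{dR}(X_0)$ is immediate: the inclusion $\supset$ follows because $\phi$ sends $\Om^{\ge n-p}_{X_0}$ into $\si_{\ge n-p}\pi_*\Si_{X,\bullet}$, while the reverse inclusion follows from the termwise retraction onto the $\Om^\bullet_{X_0}$-summand.

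For the construction, I would use the $\OO_{X_0}$-module splitting $\pi_*\OO_X=\OO_{X_0}\oplus\NN$ coming from $\pi$, where $\NN$ is the nilpotent ideal of $X_0\sub X$. This induces compatible decompositions of $\pi_*\Ber_X$, $\pi_*T_X$, and hence of each $\pi_*\Si_{X,k}=\pi_*(\Ber_X\ot\wedge^{n-k}T_X)$. The ``$\OO_{X_0}$-summand'' of $\pi_*\Si_{X,k}$ is then canonically identified with $\Om^k_{X_0}$ via the duality $\Ber_{X_0}\ot\wedge^{n-k}T_{X_0}\cong\Om^k_{X_0}$ valid in the purely even case. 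To see that these termwise identifications assemble into a map of complexes, one has to verify that the differential $\de$ of $\Si_X$, given via the right connection on $\Ber_X$ as in \cite[ch.\ IV, 5.4]{Manin}, preserves the splitting and reduces to the ordinary de Rham differential on the $\Om^\bullet_{X_0}$-summand. Acyclicity of the complement can be checked locally and reduces in a split chart to a Koszul computation along the purely odd fibers of $\pi$.

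The main technical obstacle will be the verification that $\de$ preserves the splitting, since $\de$ is given by a somewhat involved formula and the splitting itself depends on the choice of projection $\pi$. I expect this to reduce cleanly to a coordinate calculation in a split chart, exploiting that every smooth supervariety is analytically split in a neighborhood of a point.
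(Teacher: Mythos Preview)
Your strategy---use $\pi$ and $i$ to exhibit $\Om^\bullet_{X_0}\simeq\Si_{X_0,\bullet}$ as a filtered retract of $\Si_{X,\bullet}$---is the paper's, but the paper's execution is cleaner and sidesteps the obstacle you flag. Rather than building a termwise $\OO_{X_0}$-module splitting and then checking by hand that $\de$ respects it, the paper invokes the covariant (push-forward) functoriality of integral forms: the projection $\pi$ gives fiberwise Berezin integration $\pi_*:\Si_{X,\bullet}\to\Si_{X_0,\bullet}$, and the closed embedding $i$ gives $i_*:\Si_{X_0,\bullet}\to\Si_{X,\bullet}$ (this is the map $b_*\Om^\bullet_{X_0}\to\Si_X$ used in the proof of Lemma~\ref{integral-forms-coh-triv-lem}). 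Both are chain maps \emph{by construction}, both preserve degree and hence commute with stupid truncation, and $\pi_*\circ i_*=\id$ since $\pi\circ i=\id$. That already makes $\pi_*:H^q(X,\si_{\ge n-p}\Si_X)\to H^q(X_0,\si_{\ge n-p}\Si_{X_0})$ surjective, which is all one needs.

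Your termwise description also requires more care than indicated: the ``$\OO_{X_0}$-summand'' $i^*\Ber_X$ is not $\Ber_{X_0}$ but $\Ber_{X_0}\ot\det(\NN/\NN^2)^{-1}$, and $i^*T_X$ has an odd piece $(\NN/\NN^2)^\vee$ in addition to $T_{X_0}$. So isolating a copy of $\Om^k_{X_0}$ inside $\pi_*\Si_{X,k}$ as a direct summand already involves the twist by $\det(\NN/\NN^2)$ and a choice of how the odd directions sit, and only then can one ask whether $\de$ preserves it. The functorial push-forwards package all of this automatically; what you would end up reconstructing by hand is precisely the pair $i_*$, $\pi_*$.
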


\begin{proof}
Let $\pi:X\to X_0$ be a projection. It induces projections from integral forms on $X$ to those on $X_0$, which gives a commutative
diagram
\begin{diagram}
H^q(X,\si_{\ge n-p}\Si_X)&\rTo{\pi_*}&H^q(X_0,\si_{\ge n-p}\Si_{X_0})\\
\dTo{}&&\dTo{}\\
H^q(X,\Si_X)&\rTo{\pi_*}&H^q(X_0,\si_{\ge n-p}\Si_{X_0})
\end{diagram}
Here the bottom horizontal arrow is the inverse of the isomorphism \eqref{int-forms-coh-eq}.
On the other hand, the inlcusion $i:X_0\hra X$ induces the map
$$i_*:H^q(X_0,\si_{\ge n-p}\Si_{X_0})\to H^q(X,\si_{\ge n-p}\Si_X)$$
such that $\pi_*\circ i_*=\id$. This shows that the top horizontal arrow in the above diagram is surjective, and
the assertion follows.
\end{proof}

\subsection{Gauss-Manin connection}

There is a relative version $\Si_{\XX/S}$ of the complex of integral forms for a smooth proper morphism $\pi:\XX\to S$. 
Furthermore, for smooth $S$, there is a natural Gauss-Manin connection on $R\pi_*\Si_{\XX/S}$. 

The distribution $T_{\XX/S}\sub T_{\XX}$ induces a filtration $G_0\Si_{\bullet}\sub G_1\Si_{\XX,\bullet}\sub\ldots \Si_{\XX,\bullet}$ by subcomplexes in the complex of integral forms on $\XX$, where
$$G_0\Sigma_{\XX,n-p}:=\Ber_{\XX}\ot {\bigwedge}^p(T_{\XX/S}), \ \ G_1\Sigma_{\XX,n-p}:=\Ber_{\XX}\ot {\bigwedge}^{p-1}(T_{\XX/S})\cdot T_\XX, \ldots$$
Using the identification $\Ber_{\XX}\simeq \pi^*\Ber_S\ot \Ber_{\XX/S}$, we get isomorphisms of complexes
$$G_0\Sigma_\XX\simeq \pi^*\Ber_S\ot \Sigma_{\XX/S}, \ \ G_1\Sigma_\XX/G_0\Sigma_\XX\simeq \pi^*\Ber_S\ot \Sigma_{\XX/S}[1]\ot \pi^*T_S,$$
Thus, a short exact sequence of complexes
\begin{equation}\label{G0-G1-seq}
0\to G_0\to G_1\to G_1/G_0\to 0
\end{equation}
leads to an exact triangle
$$\Ber_S\ot R\pi_*\Sigma_{\XX/S}\to R\pi_*G_1\to \Ber_S\ot R\pi_*\Sigma_{\XX/S}[1]\ot \pi^*T_S\to \Ber_S\ot R\pi_*\Sigma_{\XX/S}[1]$$
We can view the corresponding morphism
$$\nabla_r: \Ber_S\ot R\pi_*\Sigma_{\XX/S}\ot \pi^*T_S\to \Ber_S\ot R\pi_*\Sigma_{\XX/S}$$
as a right connection on $\Ber_S\ot R\pi_*\Sigma_{\XX/S}$, inducing right connections on the cohomology sheaves. 
Using the next term of the filtration, $G_2$, one can check its flatness (but we don't use it in this paper, since we only consider the case $S=\A^1$).

By the standard relation between left and right connections (see \cite{Penkov}), we get a left connection
$$\nabla: R\pi_*\Sigma_{\XX/S}\to \pi^*\Om^1_S\ot R\pi_*\Sigma_{\XX/S}.$$ 

Considering stupid truncations of the exact sequence \eqref{G0-G1-seq}, we get an exact triangle
$$\Ber_S\ot R\pi_*\si_{\ge n-p-1}\Sigma_{\XX/S}\to R\pi_*\si_{\ge n-p-1}(G_1)\to \Ber_S\ot R\pi_*(\si_{\ge n-p}\Sigma_{\XX/S})[1]\ot \pi^*T_S\to 
\Ber_S\ot R\pi_*\si_{\ge n-p-1}\Sigma_{\XX/S}$$
we get a commutative diagram
\begin{diagram}
R\pi_*(\si_{\ge n-p}\Sigma_{\XX/S})&\rTo{\nabla}& \pi^*\Om^1_S\ot R\pi_*(\si_{\ge n-p-1}\Sigma_{\XX/S})\\
\dTo{}&&\dTo{}\\
R\pi_*\Sigma_{\XX/S}&\rTo{\nabla}&\pi^*\Om^1_S\ot R\pi_*\Sigma_{\XX/S}
\end{diagram}
which gives an analog of Griffiths transversality for $\nabla$.



\section{Deformation to normal cone}\label{Def-sec}

\subsection{Construction}

We consider an analog of the deformation to normal cone for the embedding $X_0\hra X$.
This will be a family of supervarieties $\pi:\XX\to \A^1$, equipped with an identification 
\begin{equation}\label{XX-nonzero-eq}
\pi^{-1}(\A^1\setminus\{0\})=X\times (\A^1\setminus\{0\}),
\end{equation}
and an identification of $\pi^{-1}(0)$ with the split supervariety $X_{sp}$ associated with $X$.
In addition, we will have a natural affine morphism $f:\XX\to X$, restricting to the natural projection on $\pi^{-1}(\A^1\setminus\{0\})$.

We define $\XX$ using the following $\Z$-graded quasicoherent sheaf of $\OO_X$-algebras:
$$f_*\OO_{\XX}=\ldots\oplus \NN^2\cdot t^{-2}\oplus \NN\cdot t^{-1}\oplus \OO_X\oplus \OO_X\cdot t\oplus \OO_X\cdot t^2\oplus \ldots,$$
where $t$ is a formal variable, with the obvious algebra structure.

The map to $X\times \A^1$ corresponds to the natural inclusion $\OO_X[t]\hra f_*\OO_{\XX}$.
On the other hand, the embedding $f_*\OO_{\XX}\hra \OO_X[t,t^{-1}]$ corresponds to inverting $t$, so it gives an identification
\eqref{XX-nonzero-eq}.

Finally, the fiber over $t=0$ corresponds to the algebra
$$f_*\OO_{\XX}/t\cdot f_*\OO_{\XX}\simeq \ldots\oplus \NN^2/\NN^3\oplus \NN/\NN^2\oplus \OO_X/\NN,$$
which gives the split supervariety $X_{sp}$.

Note also that the bosonization of $\XX$ is canonically identified with $X_0\times \A^1$.

\begin{lemma}\label{power-t-lem}
(i) One has an equality
$$t^m f^*\Ber_X=\Ber_{\XX/\A^1},$$
where $(n|m)$ is the dimension of $X$ (we view both sides as subsheaves in the sheaf of rational sections of $f^*\Ber_X$).

\noindent
(ii) For $p\ge 0$, one has an inclusion
$$t^{m+p} f^*\Si_{X,n-p}\sub \Si_{\XX/\A^1,n-p}.$$
\end{lemma}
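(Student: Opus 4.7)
My plan is to work in local super-coordinates on $X$, where the deformation to normal cone acquires a transparent change of variables, and both parts reduce to tracking powers of $t$. Choose local coordinates $x_1,\ldots,x_n,\xi_1,\ldots,\xi_m$ on $X$ with $\NN=(\xi_1,\ldots,\xi_m)$, and set $\eta_j:=\xi_j t^{-1}$, so the extra generators of $f_*\OO_\XX$ contributed by the $\NN^k t^{-k}$ summands become odd coordinates on $\XX$ satisfying $\xi_j=t\eta_j$. Thus $\XX$ has local super-coordinates $(x_1,\ldots,x_n,t,\eta_1,\ldots,\eta_m)$ over $\A^1$. Over $\{t\neq 0\}$, where $f$ restricts to the projection $X\times\G_m\to X$, there are canonical identifications $f^*\Ber_X\simeq\Ber_{\XX/\A^1}$ and $f^*\bigwedge^pT_X\simeq\bigwedge^pT_{\XX/\A^1}$; I would use these to view the two sides of each inclusion as sublattices inside a common sheaf of sections with poles along $\{t=0\}$.

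For part (i), the generators $[dx_1\cdots dx_n\mid d\xi_1\cdots d\xi_m]$ and $[dx_1\cdots dx_n\mid d\eta_1\cdots d\eta_m]$ of the two Berezinians are compared via the Jacobian of $\xi_j=t\eta_j$, which equals $tI_m$ on the odd block and the identity on the even block. The Berezinian transformation law inverts the determinant on the odd block, yielding $f^*[dx\mid d\xi]=t^{-m}[dx\mid d\eta]$; multiplying by $t^m$ gives the claim.

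For part (ii), I would use the relative tangent differential $df\colon T_{\XX/\A^1}\to f^*T_X$, which in local coordinates sends $\pa/\pa x_i\mapsto f^*(\pa/\pa x_i)$ and $\pa/\pa\eta_j\mapsto t\cdot f^*(\pa/\pa\xi_j)$ (because $f^*\xi_j=t\eta_j$). Its $p$-th exterior power remains injective, and on a monomial involving $k\le p$ odd factors it multiplies by $t^k$. Inverting this in the common sheaf, a pulled-back monomial with $k$ odd factors equals $t^{-k}$ times a regular section of $\bigwedge^pT_{\XX/\A^1}$, so multiplication by $t^p$ always lands inside $\bigwedge^pT_{\XX/\A^1}$; that is, $t^pf^*\bigwedge^pT_X\sub\bigwedge^pT_{\XX/\A^1}$. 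Tensoring this with the equality from part (i) yields (ii).

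The only delicate point is the bookkeeping of supergeometric conventions: verifying that the Berezinian contributes $t^{-m}$ on the odd block and that the exterior power contributes exactly $t^k$ for $k$ odd wedge factors. Once these are fixed, both parts reduce to a one-line coordinate computation, and the conceptual content is just the observation that the normal-cone deformation scales the odd directions by $t^{-1}$.
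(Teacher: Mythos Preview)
Your proposal is correct and follows essentially the same route as the paper: introduce the rescaled odd coordinates $\eta_j=\xi_j/t$ as relative coordinates on $\XX/\A^1$, read off the Berezinian factor $t^{-m}$ for (i), and for (ii) use the inclusion $t\cdot f^*T_X\sub T_{\XX/\A^1}$ (equivalently your analysis via $df$) together with (i). The paper states exactly this argument in two sentences; your write-up just unpacks the bookkeeping more explicitly.
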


\begin{proof}
(i) The statement is clear over $\A^1\setminus\{0\}$, so it is enough to check it in the neighborhood of each point of $\pi^{-1}(0)$. 
Let $(x_1,\ldots,x_n,\th_1,\ldots,\th_m)$ be local coordinates of the corresponding point of $X$. Then
$(x_1,\ldots,x_n,\th_1/t,\ldots,\th_m/t)$ are relative local coordinates for $\XX/\A^1$, which immediately implies the assertion.

\noindent
(ii) This follows from (i) and from the inclusion $t\cdot f^*T_X\sub T_{\XX/S}$ which is proved using the same relative local coordinates as in (i).
\end{proof}

\begin{lemma}\label{integral-forms-coh-triv-lem}
For each $i$, the canonical identification 
$$H^i R\pi_*\Si_{\XX/\A^1}|_{\A^1\setminus 0}\simeq H^i(X,\Si_X)\ot \OO_{\A^1\setminus 0}\simeq H^i(X_0,\C)\ot \OO_{\A^1\setminus 0}$$
extends to an identification
$$H^i R\pi_*\Si_{\XX/\A^1} \simeq H^i(X_0,\C)\ot \OO_{\A^1}.$$
\end{lemma}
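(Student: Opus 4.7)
My plan has two main ingredients: first, to show that $R^i\pi_*\Si_{\XX/\A^1}$ is a locally free $\OO_{\A^1}$-module of rank $\dim H^i(X_0,\C)$ by verifying constancy of fiber dimension; second, to extend the canonical identification across the origin using the Gauss--Manin connection from the previous subsection.

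For the first step, $\pi:\XX\to \A^1$ is smooth (with relative coordinates $(x_i,\th_j/t)$ as in Lemma \ref{power-t-lem}(i)) and proper. Each term $\Si_{\XX/\A^1,n-p}=\Ber_{\XX/\A^1}\ot \bigwedge^p T_{\XX/\A^1}$ is locally free over $\OO_\XX$, so $\Si_{\XX/\A^1}$ is a bounded complex of $\OO_{\A^1}$-flat coherent sheaves. By the hypercohomology version of cohomology-and-base-change, the fiber dimension function $t\mapsto \dim H^i(X_t,\Si_{X_t})$ is upper semi-continuous on $\A^1$, and when it is constant, $R^i\pi_*\Si_{\XX/\A^1}$ is locally free and its formation commutes with base change. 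For $t\neq 0$, $X_t\simeq X$, and \eqref{int-forms-coh-eq} gives $H^i(X_t,\Si_{X_t})\simeq H^i(X_0,\C)$. For $t=0$, $X_t=X_{sp}$ is the split (hence projected) supervariety associated with $X$, and Lemma \ref{proj-int-lem} yields $H^i(X_{sp},\Si_{X_{sp}})\simeq H^i(X_0,\C)$. The fiber dimension is therefore constantly $\dim H^i(X_0,\C)$, so $R^i\pi_*\Si_{\XX/\A^1}$ is locally free of this rank on $\A^1$.

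For the second step, over $\A^1\setminus 0$ the family $\pi$ is the product projection $X\times (\A^1\setminus 0)\to \A^1\setminus 0$, and, under the canonical identification on that open set, the Gauss--Manin connection $\nabla$ constructed in Section \ref{deR-sec} coincides with the trivial connection $d$ (as is standard for pushforwards of pullbacks from a product). By construction $\nabla$ is algebraic on all of $\A^1$, and since $\Om^2_{\A^1}=0$ the connection is automatically flat. Thus $(R^i\pi_*\Si_{\XX/\A^1},\nabla)$ is a flat algebraic bundle on $\A^1$ whose restriction to $\A^1\setminus 0$ has trivial monodromy. Since $\A^1$ is analytically simply connected, the horizontal sections of $\nabla$ on $\A^1\setminus 0$ (namely the constant sections $H^i(X_0,\C)$) extend uniquely to holomorphic horizontal sections on $\A^1$; these extensions are holomorphic at $0$ and equal to regular algebraic sections on $\A^1\setminus 0$, hence polynomial in $t$, and so they lie in $\Gamma(\A^1,R^i\pi_*\Si_{\XX/\A^1})$. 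They trivialize $R^i\pi_*\Si_{\XX/\A^1}$ as $H^i(X_0,\C)\ot \OO_{\A^1}$, and by construction this trivialization restricts to the canonical identification on $\A^1\setminus 0$.

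The main obstacle is showing constancy of fiber dimension at $t=0$, since this requires access to the hypercohomology of the integral forms complex on the central fiber; this is exactly why the deformation $\XX$ was built to have $X_{sp}$ as central fiber, so that Lemma \ref{proj-int-lem} supplies the required computation. Once local freeness is in hand, the extension of the identification across the origin is essentially formal, reducing to regularity of the Gauss--Manin connection at $t=0$ and simple-connectedness of $\A^1$.
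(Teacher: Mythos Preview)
Your approach differs substantially from the paper's and has a technical gap. The paper's proof is a one-liner: the bosonization of $\XX$ is $X_0\times\A^1$, and the natural map $b_*\Om^\bullet_{X_0\times\A^1/\A^1}\to\Si_{\XX/\A^1}$ (the relative form of \eqref{int-forms-coh-eq}) induces an isomorphism on $R\pi_*$; since the left-hand side is manifestly $H^\bullet(X_0,\C)\otimes\OO_{\A^1}$, the lemma follows immediately.

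Your route via base change and the Gauss--Manin connection is conceptually reasonable, but Step~1 rests on the claim that $\Si_{\XX/\A^1}$ is a \emph{bounded} complex of flat coherent sheaves, and this is false whenever $m>0$. The relative tangent sheaf $T_{\XX/\A^1}$ has super-rank $n|m$, so $\bigwedge^p T_{\XX/\A^1}\neq 0$ for every $p\ge 0$ (the odd part contributes a symmetric algebra), and $\Si_{\XX/\A^1}$ is unbounded below. The semicontinuity/base-change theorem you invoke requires a bounded complex of flats (equivalently, a perfect complex), so as written the argument does not apply. To repair it you would need to exhibit a bounded complex quasi-isomorphic to $\Si_{\XX/\A^1}$; the natural candidate is exactly $b_*\Om^\bullet_{X_0\times\A^1/\A^1}$, at which point the paper's direct argument is already finished and your Step~2 is superfluous. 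A minor additional point: for the fiber at $t=0$ you cite Lemma~\ref{proj-int-lem}, but that lemma concerns the Hodge filtration; the dimension count you need is just \eqref{int-forms-coh-eq} applied to $X_{sp}$, whose bosonization is again $X_0$.
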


\begin{proof}
Let $b:X_0\times \A^1\to \XX$ denote the embedding of the bosonization. Then
the assertion follows from the fact that the natural map
$$b_*\Om^\bullet_{X_0\times \A^1/\A^1}\to \Si_{\XX/\A^1}$$
induces an isomorphism on $R\pi_*$.
\end{proof}

\subsection{Proofs of Theorem \ref{int-thm} and of Theorem A}

\begin{proof}[Proof of Theorem \ref{int-thm}]
Let us denote by $\kappa^{n-p}_{\XX}:\si_{\ge n-p}\Si_{\XX/\A^1}\to \Si_{\XX/\A^1}$ the natural inclusion.

Below we denote by $\nabla_t$ the Gauss-Manin connection with respect to $d/dt$.
We will also not make a distinction between quasicoherent sheaves on $\A^1$ and the corresponding modules over $\C[t]$
given by global sections.

From Lemma \ref{power-t-lem}(ii) we get an inclusion of complexes 
$$\tau_{m+p}:f^*\si_{\ge n-p}\Si_X\to \si_{\ge n-p}\Si_{\XX/\A^1},$$
which restricts to $\cdot t^{m+p}$ over $\A^1\setminus \{0\}$.

Thus, starting with a class $\a\in H^q(X,\si_{\ge n-p}\Si_X)$, we get the class
$$\tau_{m+p}f^*\a\in H^q(\XX,\si_{\ge n-p}\Si_{\XX/\A^1})\simeq H^qR\pi_*\si_{\ge n-p}\Si_{\XX/\A^1}$$
which restricts to $t^{m+p}\cdot \a$ over $\A^1\setminus \{0\}$.

We claim that 
$$\kappa^{n-p}_{\XX}(\tau_{m+p}f^*\a)=t^{m+p}\cdot \kappa^{n-p}(\a),$$
where we use the identification 
\begin{equation}\label{HqX[t]-eq}
H^qR\pi_*\Si_{\XX/\A^1}\simeq H^q(X,\Si_X)[t]
\end{equation}
(see Lemma \ref{integral-forms-coh-triv-lem}).
Indeed, it is enough to check this identity after inverting $t$,
in which case it follows from the triviality of the family over $\A^1\setminus\{0\}$ (see \eqref{XX-nonzero-eq}).

Similarly, we observe that under the identification \eqref{HqX[t]-eq}, the Gauss-Manin connection $\nabla_t$ on the left corresponds
to the usual differentiation $d/dt$ on the right (indeed, this is clear after inverting $t$).
Hence, using the commutative diagram on $\A^1$,
\begin{diagram}
H^qR\pi_*(\si_{\ge n-p}\Si_{\XX/\A^1})&\rTo{\nabla}& H^qR\pi_*(\si_{\ge n-p-1}\Si_{\XX/\A^1})\\
\dTo{\kappa^{n-p}_{\XX}}&&\dTo{\kappa^{n-p-1}_{\XX}}\\
H^qR\pi_*\Si_{\XX/\A^1}&\rTo{\nabla}& H^qR\pi_*\Si_{\XX/\A^1},
\end{diagram}
we deduce that
\begin{equation}\label{kappa-(n-2p-m)-eq}
\kappa^{n-2p-m} \bigl(\nabla^{m+p}\tau_{m+p}f^*\a\bigr)=(m+p)!\cdot \kappa^{n-p}(\a).
\end{equation}

Let $\iota:\{0\}\hra \A^1$ denote the embedding of the origin. For a complex of flat $\pi^{-1}\OO_{\A^1}$-modules $F$ on $\XX$,
we have a natural base change morphism 
$$L\iota^*R\pi_*F\to R\Ga(\pi^{-1}(0),j^\bullet F\otimes_{\C[t]}\C),$$
where $j:\pi^{-1}(0)\hra \XX$ is the natural embedding.
On the other hand, since $\iota_*$ is exact, for each $q\in \Z$, we have a morphism
$$H^q R\pi_*F\to H^q \iota_*L\iota^*R\pi_*F\simeq \iota_*H^q L\iota^*R\pi_*F.$$
Combining it with the base change morphism, we get natural maps on $\A^1$,
$$H^q R\pi_*F\to \iota_*H^q(\pi^{-1}(0),j^\bullet F\otimes_{\C[t]}\C).$$

We can apply the above construction to $F=\Si_{\XX/\A^1}$ and to $F=\si_{\ge n-2p-m}\Si_{\XX/\A^1}$.
Taking into account the identification of $\pi^{-1}(0)$ with $X_{sp}$, we get a  commutative diagram of $\OO_{\A^1}$-modules
\begin{equation}
\label{res-0-diagram}
\begin{diagram}
H^qR\pi_*(\si_{\ge n-2p-m}\Si_{\XX/\A^1})&\rTo{}& \iota_*H^q(X_{sp},\si_{\ge n-2p-m}\Si_{X_{sp}})\\
\dTo{\kappa^{n-2p-m}}&&\dTo{}\\
H^qR\pi_*\Si_{\XX/\A^1}&\rTo{r}& \iota_*H^q(X_{sp},\Si_{X_{sp}})
\end{diagram}
\end{equation}
We start with the class $\nabla^{m+p}\tau_{m+p}f^*\a$ in the top left corner and look at the two resulting
elements in the bottom right corner.
By Lemma \ref{integral-forms-coh-triv-lem}, the map $r$ is identified with map of the evaluation at $t=0$, 
$$H^q(X_0,\C)[t]\to H^q(X_0,\C).$$
Hence, by \eqref{kappa-(n-2p-m)-eq},
the class $r\kappa^{n-2p-m} \bigl(\nabla^{m+p}\tau_{m+p}f^*\a\bigr)$ is equal to $(m+p)!\kappa^{n-p}(\a)$.
Finally, by Lemma \ref{proj-int-lem}, the image of the right vertical map in the diagram \eqref{res-0-diagram}
is contained in the Hodge subspace $F^{n-2p-m}H^q_{dR}(X_0)$.
Thus, we conclude that $(m+p)!\kappa^{n-p}(\a)$ lies in that subspace.
\end{proof}

\medskip

\begin{proof}[Proof of Theorem A]
The case $p=0$ of Theorem \ref{int-thm} gives the inclusion
$$\im(\kappa_{n+i}^n)=F_nH^{n+i}(\Si_X)\sub F_{n-m}H^{n+i}(X_0,\C).$$
By Proposition \ref{p0-prop}, passing to the orthogonals, we get the inclusion 
$$F^{m+1}H^{n-i}(X_0,\C)\sub F^1H^{n-i}_{dR}(X).$$
\end{proof}

\end{document}